\newtheorem{theorem}{Theorem}
\newtheorem{lemma}[theorem]{Lemma}
\theoremstyle{definition}
\theoremstyle{remark}
\begin{document}

\title{On a Cyclic Inequality Related to Chebyshev Polynomials}

\date{}

               \author{
  Mohammad Javaheri ~~~~~~~~~~~~~~~~~~~~~~ Harry Shen\\
  \texttt{mjavaheri@siena.edu}~~~~~~~\texttt{hx21shen@siena.edu}\\
  515 Loudon Road \\
  Siena College, School of Science\\
  Loudonville, NY 12211\\
  }

\maketitle

\begin{abstract}
 We show that any weighted geometric mean of Chebyshev polynomials is bounded from above by another Chebyshev polynomial. We also study a related homogeneous cyclic inequality
 $$ \left (\sum_{i=1}^n
    x_i^{(a+b+1)/2} \right )^2
 \geq   \sum_{i=1}^n  x_i    \sum_{i=1}^n x_i^a x_{i+1}^b,$$
where $a,b,x_1,\ldots, x_n$ (with $x_{n+1}=x_1$) are nonnegative. In particular, we prove that the inequality holds when $a=b=1$ and $n\leq 8$ for all nonnegative numbers $x_1,\ldots, x_n$. \footnote{Mathematics Subject Classification (2010): 26D07.\\ Keywords: Chebyshev polynomials, cyclic homogeneous inequality. 
}
\end{abstract}
\section{Introduction}

Chebyshev polynomials of the first kind are defined by the recurrence relation:
\begin{equation}\label{recurr}
T_{n+1}(x)=2xT_n(x)-T_{n-1}(x),
\end{equation}
where $T_0(x)=1$ and $T_1(x)=x$. The most well-known property of Chebyshev polynomials is that they express $\cos(n\theta)$ in terms of $\cos(\theta)$ via the equation $\cos(n\theta)=T_n(\cos \theta)$. Chebyshev polynomials are a special kind of Jacobi polynomials (also known as hypergeometric polynomials), a class of classical orthogonal polynomials. Chebyshev was the first mathematician to have noticed them in 1854, but their importance was not noticed until Hans Hahn rediscovered them and named them after Chebyshev. The polynomials $T_n(x)$ are orthogonal with respect to the inner product 
$$\langle f, g \rangle = \frac{2}{\pi} \int_{-1}^1 f(x)g(x) \dfrac{dx}{\sqrt{1-x^2}}.$$
In other words, $\langle T_m, T_n \rangle =0$ for all positive integers $m\neq n$, and $\langle T_n, T_n \rangle=1$ for all integers $n\geq 0$. 

Chebyshev polynomials have the explicit expression 
\begin{equation}\label{expform}
T_n(x)=\frac{1}{2}  \left (x-\sqrt{x^2-1} \right )^n+ \frac{1}{2}\left (x+\sqrt{x^2-1} \right )^n,
\end{equation}
for $x\geq 1$. By replacing $n$ with $\alpha$ in Equation \eqref{expform}, one can generalize Chebyshev polynomials to functions $T_\alpha: [1,\infty) \rightarrow [1,\infty)$ for all values of $\alpha \in \mathbb{R}$. Equivalently, $T_\alpha$ is defined by
\begin{equation}\label{deft}
T_{\alpha}\left ( \frac{x+x^{-1}}{2} \right )=\frac{x^\alpha+x^{-\alpha}}{2},
\end{equation}
for all $x>0$. From Equation \eqref{recurr}, we can see by induction that $T_{n+1}(x) \geq T_n(x)$ for all $x\geq 1$. One sees that if $|\alpha| \geq |\beta|$, then $T_\alpha(x) > T_\beta(x)$ for all $x >1$ (see Lemma \ref{one}). More generally, any weighted geometric mean of functions of the form $T_\alpha$ is bounded by another function of the same form:

\begin{theorem}\label{chebgen}
Let $t_i$, $1\leq i \leq n$, be nonnegative real numbers such that $\sum_{i=1}^n t_i=1$. If $\alpha^2 \geq \sum_{i=1}^nt_i\alpha_i^2$, then we have
\begin{equation}\label{cheb}
\prod_{i=1}^n(T_{\alpha_i}(x))^{t_i} \leq T_\alpha(x),
\end{equation}
for all $x\geq 1$, where the equality occurs if and only if $x=1$ or $|\alpha|=|\alpha_i|$ for all $1\leq i \leq n$ with $t_i \neq 0$. Conversely, if the inequality \eqref{cheb} holds for all $x\geq 1$, then $\alpha^2 \geq \sum_{i=1}^n t_i \alpha_i^2$.
\end{theorem}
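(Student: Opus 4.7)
The plan is to substitute $x = (y+y^{-1})/2$ with $y = e^u \geq 1$, so by \eqref{deft} the inequality \eqref{cheb} becomes
$$\prod_{i=1}^n \cosh(\alpha_i u)^{t_i} \;\leq\; \cosh(\alpha u), \qquad u \geq 0.$$
Because $\cosh$ is even I may assume $\alpha,\alpha_1,\dots,\alpha_n \geq 0$, and because $\cosh$ is strictly increasing on $[0,\infty)$ it is enough to handle the tight case $\alpha = \sqrt{\sum_i t_i\alpha_i^2}$; the general hypothesis $\alpha^2 \geq \sum_i t_i\alpha_i^2$ then follows at once by monotonicity. Taking logarithms, the claim reduces to
$$\sum_{i=1}^n t_i\, f(\alpha_i^2) \;\leq\; f\Bigl(\sum_{i=1}^n t_i\alpha_i^2\Bigr), \qquad \text{where } f(s) := \log\cosh(\sqrt{s}\,u),$$
which is exactly Jensen's inequality applied to the function $f$, provided $f$ is concave.

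So the theorem is reduced to showing $f$ is concave on $[0,\infty)$. A direct chain-rule computation gives
$$f''(s) = \frac{u}{8\, s^{3/2}\cosh^2(\sqrt{s}\,u)}\bigl[\,2u\sqrt{s} - \sinh(2u\sqrt{s})\,\bigr],$$
and the bracket is $\leq 0$ by the elementary inequality $\sinh t \geq t$ for $t \geq 0$. Moreover $f''(s) < 0$ whenever $u,s > 0$, so $f$ is strictly concave on $[0,\infty)$ as soon as $u > 0$. Jensen's inequality then yields \eqref{cheb}; and when $u > 0$, equality in Jensen forces all $\alpha_i^2$ with $t_i \neq 0$ to coincide, after which the monotonicity step forces that common value to equal $\alpha^2$, i.e., $|\alpha| = |\alpha_i|$. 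The case $u = 0$ (i.e., $x = 1$) gives equality trivially since $T_\beta(1) = 1$ for every $\beta$.

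For the converse, assume \eqref{cheb} holds for every $x \geq 1$. Setting $x = \cosh u$ and using the expansion $\log\cosh(\beta u) = \tfrac{1}{2}\beta^2 u^2 + O(u^4)$ as $u \to 0^+$, the inequality reads
$$\tfrac{1}{2}\Bigl(\sum_{i=1}^n t_i\alpha_i^2\Bigr)u^2 + O(u^4) \;\leq\; \tfrac{1}{2}\alpha^2 u^2 + O(u^4);$$
dividing by $u^2$ and letting $u \to 0^+$ yields $\sum_i t_i\alpha_i^2 \leq \alpha^2$, as required.

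The main technical step is the concavity of $f(s) = \log\cosh(\sqrt{s}\,u)$. The crucial observation is that the correct Jensen variable is $s = \alpha^2$, not $\alpha$ itself, which is exactly what lines up with the hypothesis $\alpha^2 \geq \sum_i t_i\alpha_i^2$; once this reparametrization is made, everything collapses to the scalar inequality $\sinh t \geq t$. I expect the only real bookkeeping to be the verification of the equality clause, where one must chase equality through both the Jensen step and the monotonicity step.
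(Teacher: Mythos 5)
Your proof is correct, and it reaches the same structural conclusion as the paper — that the right object to study is the concavity of $s \mapsto \ln T_{\sqrt{s}}(x)$ (your $f(s)=\log\cosh(\sqrt{s}\,u)$ after the substitution $x=\cosh u$) followed by Jensen's inequality — but it establishes that concavity by a genuinely different and more direct route. The paper first proves the two-term case $(T_\alpha)^2 \geq T_\beta T_\gamma$ for $2\alpha^2\geq\beta^2+\gamma^2$ (Lemma \ref{lemcheb}) via a differential-inequality argument on $H(x)=xG'(x)$, deduces that $f_x(\alpha)=\ln T_{\sqrt{\alpha}}(x)$ is midpoint-concave, and then invokes Jensen's theorem that a continuous midpoint-concave function is concave. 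You instead compute $f''(s)$ in closed form and reduce everything to $\sinh t\geq t$; I checked the computation
$$f''(s)=\frac{u}{8 s^{3/2}\cosh^2(\sqrt{s}\,u)}\bigl[2u\sqrt{s}-\sinh(2u\sqrt{s})\bigr]$$
and it is right. Your approach is shorter and self-contained, and it delivers \emph{strict} concavity for $u,s>0$ for free, which makes the equality analysis (equality in Jensen forces the $\alpha_i^2$ with $t_i\neq 0$ to coincide, then the monotonicity step pins the common value at $\alpha^2$) cleaner than the paper's. What the paper's longer route buys is the standalone two-variable Lemma \ref{lemcheb}, which is reused later to prove Theorem \ref{case2lem}; your argument would recover that lemma only as the special case $n=2$, $t_1=t_2=1/2$ of the theorem. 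Your converse via the expansion $\log\cosh(\beta u)=\tfrac12\beta^2u^2+O(u^4)$ is essentially identical to the paper's second-derivative-at-$x=1$ argument.
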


When $n=2$, Theorem \ref{chebgen} can be used to show that if $(c-1)^2 \leq 2ab$, then
$$
(x^c+y^c)^2 \geq (x+y)(x^ay^b+y^ax^b),
$$
for all $x,y\geq 0$, where $c=(a+b+1)/2$. This inequality is a homogeneous cyclic inequality on two variables. We are interested in a general form of this inequality on $n$ arbitrary nonnegative variables:
\begin{equation} \label{stated}
 \left (\sum_{i=1}^n
    x_i^{c} \right )^2
 \geq  \sum_{i=1}^n  x_i    \sum_{i=1}^n x_i^ax_{i+1}^b.
\end{equation}
It turns out that, given fixed values of $a,b \geq 0$, the validity of the cyclic homogeneous inequality \eqref{stated} depends on $n$. One can make the following observations regarding the inequality \eqref{stated}:
\begin{itemize}
\item[i)] If $a+b=1$, then the inequality holds for all $n\geq 1$ (by the Rearrangement inequality \cite{HLP}). 
\item[ii)] Given $a,b\geq 0$ with $a+b\neq 1$, there exists an integer $N(a,b)$ such that the inequality \eqref{stated} holds for all $x_1,\ldots, x_n\geq 0$ if and only if $n\leq N(a,b)$; \cite{J}.  
\item[iii)] Given a positive integer $n$, the set ${\cal O}_n$ of $(a,b) \in [0,\infty)\times [0,\infty)$ for which the inequality \eqref{stated} holds for all $x_1,\ldots, x_n\geq 0$ is a topologically closed and convex subset of $\mathbb{R}^2$.
\end{itemize}

Theorem \ref{case2lem} shows that ${\cal O}_2=\{(a,b): a,b\geq 0~and~(a+b-1)^2 \leq 8ab\}$. The problem of completely characterizing ${\cal O}_n$ for $n>2$ seems difficult, however, a necessary condition is that
$$(a+b-1)^2 \leq 8ab\sin^2(\pi/n).$$
For $n=3$, in Theorem \ref{case3lem}, we will show that 
$$\{(a,b): a,b\geq 0~and~2a+1\geq b\geq (a-1)/2 \geq -b/2\} \subseteq {\cal O}_3.$$
As an example, in section \ref{n11}, we will consider the case of $a=b=1$ and prove the following theorem.
\begin{theorem}\label{main}
Let $x_1,\ldots, x_n$ be nonnegative real numbers. If $n\leq 8$, then
\begin{equation}\label{5gen}
    \left (\sum_{i=1}^n
    x_i^{3} \right )^2
 \geq  \left ( \sum_{i=1}^n  x_i^2\right )  \left ( \sum_{i=1}^n x_i^2x_{i+1}^2\right ),
   \end{equation}
where the equality occurs if and only if $x_1=x_2=\cdots=x_n$. Moreover, the inequality \eqref{5gen} does not hold in general if $n>8$. 
\end{theorem}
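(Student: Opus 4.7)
To prove the negative half of the theorem (failure for $n > 8$), I would test the perturbation $x_i = 1 + \varepsilon\cos(2\pi i/n)$ for small $\varepsilon > 0$. Because $\sum_i \cos(2\pi i/n) = 0$, the zeroth- and first-order Taylor terms on each side of \eqref{5gen} cancel; expanding to second order, which is most transparent after a discrete Fourier transform on $\mathbb{Z}/n\mathbb{Z}$, one finds that the quadratic form governing LHS $-$ RHS in a perturbation direction $y$ is $n^2 \sum_{k\neq 0} |\hat y_k|^2 (3 - 4\cos(2\pi k/n))$. For our chosen direction only the modes $k = \pm 1$ are nonzero, giving coefficient $\tfrac{n^2}{2}(3 - 4\cos(2\pi/n)) = \tfrac{n^2}{2}(8\sin^2(\pi/n) - 1)$. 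This is strictly negative exactly when $8\sin^2(\pi/n) < 1$, i.e., for $n \geq 9$, so \eqref{5gen} fails on this perturbation for such $n$, consistent with the necessary Fourier condition recalled earlier.

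For the positive half, $n = 1$ is trivial, and $n = 2, 3$ follow at once from Theorem \ref{case2lem} and Theorem \ref{case3lem} applied at $(a,b) = (1,1)$: indeed $(a+b-1)^2 = 1 \leq 8 = 8ab$, and the chain $2a+1 \geq b \geq (a-1)/2 \geq -b/2$ holds. For $n = 4$, I would exploit the factorization $\sum_{i=1}^4 x_i^2 x_{i+1}^2 = (x_1^2 + x_3^2)(x_2^2 + x_4^2)$. Setting $u = x_1^2 + x_3^2$ and $v = x_2^2 + x_4^2$, the power-mean inequality gives $x_1^3 + x_3^3 \geq u^{3/2}/\sqrt{2}$ and $x_2^3 + x_4^3 \geq v^{3/2}/\sqrt{2}$, so $(\sum x_i^3)^2 \geq (u^{3/2} + v^{3/2})^2/2$. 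It then suffices to prove $(u^{3/2} + v^{3/2})^2 \geq 2uv(u+v)$, which after the substitution $u = s^2$, $v = t^2$ is precisely the $n = 2$ instance of \eqref{5gen} and hence already known; tracking equality throughout forces $x_1 = x_2 = x_3 = x_4$.

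The main obstacle is the four remaining cases $n = 5, 6, 7, 8$. For even $n$ the odd/even partition still rewrites $\sum x_i^2 x_{i+1}^2$, but no longer yields the clean product form from $n = 4$, so the reduction above does not survive. I see two realistic approaches: (i) a Lagrange-multiplier analysis of LHS $-$ RHS on the compact slice $\sum x_i^2 = 1$, aiming to show that every interior critical point other than the diagonal $x_1 = \cdots = x_n$ takes at most two distinct coordinate values, after which a one-parameter check finishes each $n$; or (ii) an explicit sum-of-squares decomposition of the degree-$6$ cyclic polynomial LHS $-$ RHS for each such $n$, possibly produced by computer algebra. Whichever route is chosen must respect the sharp bound $\cos(2\pi/n) \leq 3/4$, which at $n = 8$ (where $\cos(\pi/4) = \sqrt{2}/2 \approx 0.707$) leaves essentially no slack beyond the Fourier lower bound, so I expect the certificates at $n = 7, 8$ to be the most delicate step, both algebraically and in ruling out non-diagonal equality.
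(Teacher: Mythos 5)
Your negative half is correct and takes a genuinely different route from the paper: you perturb the constant tuple by $x_i=1+\varepsilon\cos(2\pi i/n)$ and compute the second-order defect, whose coefficient $\tfrac{n^2}{2}\left(3-4\cos(2\pi /n)\right)=\tfrac{n^2}{2}\left(8\sin^2(\pi/n)-1\right)$ is indeed negative exactly for $n\geq 9$; the paper instead just exhibits an explicit numerical $9$-tuple and invokes the reduction from $n$ to $n-1$ variables. Your argument is more uniform in $n$ and explains \emph{why} $8$ is the threshold, so this part is fine. The cases $n\leq 4$ are also correct: $n=2,3$ do follow from Theorems \ref{case2lem} and \ref{case3lem} at $(a,b)=(1,1)$ after replacing $x$ by $x^2$, and your $n=4$ reduction via the factorization $\sum_{i=1}^4 x_i^2x_{i+1}^2=(x_1^2+x_3^2)(x_2^2+x_4^2)$ and power means is a clean self-contained argument that the paper does not give.

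The genuine gap is that the cases $n=5,6,7,8$ --- which are the entire substance of the positive half (by the monotonicity in $n$ that the paper notes, only $n=8$ actually needs to be proved, and it subsumes all smaller $n$) --- are left as a choice between two ``realistic approaches'' rather than proved. Naming a strategy is not a proof, and the step you yourself flag as delicate is exactly where the work lies. For comparison, the paper carries out your option (i): it maximizes $\sum x_i^4x_{i+1}^4/(\sum x_i^6)^2$ on the compact slice $\sum x_i^4=1$, shows by a boundary perturbation that a maximizer has all coordinates positive, and then manipulates the Lagrange equations (in the variables $y_i=x_i^2$) to get $y_1+y_5=y_3+y_7=r$ and $y_2+y_6=y_4+y_8=s$, and finally $y_{\mathrm{odd}}=r/2$, $y_{\mathrm{even}}=s/2$, reducing to the two-variable check $2r^2s^2\leq (r^3+s^3)^2$ under $r^2+s^2=1$. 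Your prediction that every non-diagonal critical point takes at most two alternating values is essentially what happens, but establishing it requires ruling out a degenerate case ($9C^2=2rs$ in the linear recursion for the deviations $\bar y_i$), which the paper does via the auxiliary estimate $8\sum x_i^3\geq (\sum x_i)(\sum x_i^2)$ of Lemma \ref{ineq123} and the numerical margin $6\sqrt{2}>8$. None of this analysis appears in your proposal, so the inequality \eqref{5gen} for $5\leq n\leq 8$ remains unproven as written.
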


\section{Inequalities on Chebyshev polynomials}
In this section, we prove Theorem \ref{chebgen} which gives an upper bound for geometric means of Chebyshev polynomials. We first show that $T_\alpha(x)$ is an increasing function of $|\alpha|$ for a fixed $x>1$. 

\begin{lemma}\label{one}
If $|\alpha| \geq |\beta|$, then $T_\alpha(x) \geq T_\beta(x)$ for all $x\geq 1$. The equality occurs if and only if $x=1$ or $|\alpha|=|\beta|$. 
\end{lemma}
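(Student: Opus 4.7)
The plan is to reduce the inequality to a statement about nonnegative exponents and then exploit the parametrization that defines $T_\alpha$ in \eqref{deft}. First, I would observe directly from \eqref{deft} that $(x^{\alpha}+x^{-\alpha})/2$ is invariant under $\alpha \mapsto -\alpha$, so $T_\alpha = T_{-\alpha}$. Hence, without loss of generality, I may assume $\alpha \geq \beta \geq 0$, and the claim to prove is that $T_\alpha(x) \geq T_\beta(x)$ for all $x \geq 1$, with equality iff $x = 1$ or $\alpha = \beta$.

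Next, I would fix $x \geq 1$ and note that the map $t \mapsto (t + t^{-1})/2$ is a bijection $[1,\infty) \to [1,\infty)$, so there is a unique $t \geq 1$ with $x = (t + t^{-1})/2$; moreover $x = 1$ corresponds to $t = 1$ and $x > 1$ to $t > 1$. Setting $f(s) := (t^s + t^{-s})/2$, the defining relation \eqref{deft} gives $T_s(x) = f(s)$ for $s \geq 0$, so the lemma reduces to showing that $f$ is non-decreasing on $[0,\infty)$, and is strictly increasing precisely when $t > 1$.

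The verification is a one-line computation: $f'(s) = \tfrac{1}{2}(\ln t)(t^s - t^{-s})$. When $t = 1$, $f \equiv 1$, which yields $T_\alpha(1) = T_\beta(1) = 1$ regardless of the exponents. When $t > 1$, $\ln t > 0$ and $t^s - t^{-s} > 0$ for all $s > 0$, so $f$ is strictly increasing on $[0,\infty)$; consequently $\alpha \geq \beta \geq 0$ gives $f(\alpha) \geq f(\beta)$, and strictly so when $\alpha > \beta$.

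Putting these pieces together handles the equality analysis as well: equality holds iff either $t = 1$ (equivalently $x = 1$) or $\alpha = \beta$, which in terms of the original (possibly signed) exponents is $|\alpha| = |\beta|$. No genuine obstacle is anticipated; the only thing one must be careful about is the passage from signed exponents to $|\alpha|,|\beta|$, which is handled cleanly by the symmetry $T_\alpha = T_{-\alpha}$ noted at the outset.
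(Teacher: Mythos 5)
Your proof is correct and follows essentially the same route as the paper's: both reduce to showing that $s \mapsto t^s + t^{-s}$ is increasing in $s \geq 0$ for fixed $t > 1$ via the derivative $(\ln t)(t^s - t^{-s}) > 0$. Your version is slightly more explicit about the change of variable $x = (t+t^{-1})/2$ and the symmetry $T_\alpha = T_{-\alpha}$, but the core argument is identical.
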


\begin{proof}
Without loss of generality, suppose that $\alpha \geq \beta >0$. By virtue of Equation \ref{deft}, we need to show that the function $\alpha \mapsto x^\alpha+x^{-\alpha}$ is a strictly increasing function of $\alpha > 0$ for a fixed positive $x\neq 1$. The claim then follows from 
$$\frac{d}{d\alpha}(x^\alpha+x^{-\alpha})=(x^\alpha-x^{-\alpha}) \ln x> 0,$$
which holds for all positive $x\neq 1$ and $\alpha > 0$. 
\end{proof}

Next, we prove Theorem \ref{chebgen} in the case of $n=2$. 

\begin{lemma}\label{lemcheb}
If $2\alpha^2 \geq \beta^2+\gamma^2$, then 
\begin{equation}\label{case2cheb}
(T_\alpha(x))^2 \geq T_\beta(x)T_\gamma(x),
\end{equation}
for all $x\geq 1$. 
The equality occurs if and only if $x=1$ or $|\alpha|=|\beta|=|\gamma|$. Conversely, if the inequality \eqref{case2cheb} holds for all $x\geq 1$, then $2\alpha^2 \geq \beta^2+\gamma^2$.
\end{lemma}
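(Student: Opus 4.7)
The plan is to use the parametrization $x=\cosh u$ with $u\geq 0$, so that the defining relation \eqref{deft} gives $T_\alpha(x)=\cosh(\alpha u)$. The desired inequality then becomes $\cosh^2(\alpha u)\geq \cosh(\beta u)\cosh(\gamma u)$. Applying the identities $\cosh^2 y=(1+\cosh 2y)/2$ and $2\cosh y\cosh z=\cosh(y+z)+\cosh(y-z)$, this is equivalent to
$$\cosh(2\alpha u)+1 \geq \cosh((\beta+\gamma)u)+\cosh((\beta-\gamma)u).$$
Setting $r=2|\alpha|$, $p=|\beta+\gamma|$, $q=|\beta-\gamma|$, the identity $p^2+q^2=2(\beta^2+\gamma^2)$ shows that the hypothesis $2\alpha^2\geq \beta^2+\gamma^2$ is exactly the condition $r^2\geq p^2+q^2$, and since $\cosh$ is even, I may work with the non-negative triple $(r,p,q)$.

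The key step is to introduce the auxiliary function $\phi(s)=\cosh(\sqrt{s}\,u)$ on $s\geq 0$, whose Maclaurin expansion
$$\phi(s)=\sum_{k=0}^\infty \frac{u^{2k}}{(2k)!}\,s^k$$
is a power series in $s$ with non-negative coefficients. Consequently $\phi$ is increasing and convex on $[0,\infty)$, and when $u>0$ both the linear coefficient $u^2$ and the quadratic coefficient $u^4/24$ are positive, so $\phi$ is then strictly increasing and strictly convex. The required inequality is exactly $\phi(r^2)+\phi(0)\geq \phi(p^2)+\phi(q^2)$. From $r^2\geq p^2+q^2$ I get $\phi(r^2)\geq \phi(p^2+q^2)$ by monotonicity, while the convex ``increasing differences'' property supplies $\phi(p^2+q^2)-\phi(q^2)\geq \phi(p^2)-\phi(0)$; adding these two inequalities closes the argument.

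For the equality case, assume $x>1$ so that $u>0$ and $\phi$ is strictly monotone and strictly convex. The first step then demands $r^2=p^2+q^2$, i.e.\ $2\alpha^2=\beta^2+\gamma^2$, while the second demands $pq=0$, i.e.\ $|\beta|=|\gamma|$; together these force $\alpha^2=\beta^2=\gamma^2$, as stated. For the converse, I Taylor-expand in $u$: from $\cosh(\alpha u)=1+\alpha^2 u^2/2+O(u^4)$ a short calculation gives
$$(T_\alpha(x))^2-T_\beta(x)T_\gamma(x)=\tfrac{1}{2}\bigl(2\alpha^2-\beta^2-\gamma^2\bigr)u^2+O(u^4),$$
so if $2\alpha^2<\beta^2+\gamma^2$ this difference is negative for all sufficiently small $u>0$, ruling out the inequality for some $x>1$.

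I expect the only subtle point to be the choice of the auxiliary function. The naive attempt to work with $\alpha\mapsto \ln\cosh(\alpha u)$ fails because this map is convex in $\alpha$, the wrong direction; reparametrizing by $s=\alpha^2$ turns the quadratic hypothesis $2\alpha^2\geq \beta^2+\gamma^2$ into an affine constraint on $s$, after which the non-negativity of the Maclaurin coefficients of $\phi(s)$ delivers both monotonicity and convexity with the correct signs in a single stroke.
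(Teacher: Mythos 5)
Your proof is correct, and it takes a genuinely different route from the paper's. Both arguments begin from the same product-to-sum decomposition (in your notation, $2\cosh(\beta u)\cosh(\gamma u)=\cosh((\beta+\gamma)u)+\cosh((\beta-\gamma)u)$; in the paper, the expansion of $(x^\beta+x^{-\beta})(x^\gamma+x^{-\gamma})$ into four monomials), but they diverge from there. The paper sets $G(x)=(x^\alpha+x^{-\alpha})^2-(x^\beta+x^{-\beta})(x^\gamma+x^{-\gamma})$, shows that $H=xG'$ satisfies $xH'\geq 0$ by dominating each of $x^{\pm(\beta+\gamma)}$, $x^{\pm(\beta-\gamma)}$ by $x^{\pm 2\alpha}$, and then integrates twice using $H(1)=G(1)=0$ together with a sign analysis on either side of $x=1$. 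You instead reparametrize by $s=\alpha^2$ and observe that $\phi(s)=\cosh(\sqrt{s}\,u)=\sum_k u^{2k}s^k/(2k)!$ has nonnegative Maclaurin coefficients, so it is increasing and convex; the inequality then falls out of monotonicity plus the increasing-differences property $\phi(p^2+q^2)-\phi(q^2)\geq\phi(p^2)-\phi(0)$ (equivalently, termwise superadditivity $(p^2+q^2)^k\geq p^{2k}+q^{2k}$). Your route buys a cleaner and more transparent equality analysis: strict monotonicity forces $r^2=p^2+q^2$ and strict convexity forces $pq=0$, which together give $|\alpha|=|\beta|=|\gamma|$ directly, whereas the paper's stated equality conditions require some untangling. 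It also isolates the structural fact that $s\mapsto T_{\sqrt{s}}(x)$ is convex with nonnegative power-series coefficients, which is a reusable observation sitting next to (though distinct from) the log-concavity of $s\mapsto T_{\sqrt{s}}(x)$ that the paper later extracts from this lemma to prove Theorem 1. Your converse via the expansion in $u$ is the same computation as the paper's $G''(1)\geq 0$ argument in different coordinates.
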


\begin{proof}
 We let 
$$G(x)=(x^\alpha+x^{-\alpha})^2 -(x^\beta+x^{-\beta})(x^\gamma+x^{-\gamma} ).$$ 
To prove the inequality \eqref{case2cheb}, it is sufficient to show that $G(x) \geq 0$ for all $x>0$. For $H(x)=xG’(x)$, one has $H(1)=0$ and
\begin{align}\nonumber
xH’(x)&=4\alpha^2(x^{2\alpha}+x^{-2\alpha})-(\beta+\gamma)^2 (x^{\beta+\gamma}+x^{-\beta-\gamma})-(\beta-\gamma)^2(x^{\beta-\gamma}+x^{-\beta+\gamma}) \\ \nonumber
& \geq 4\alpha^2(x^{2\alpha}+x^{-2\alpha})-(\beta+\gamma)^2(x^{2\alpha}+x^{-2\alpha})-(\beta-\gamma)^2(x^{2\alpha}+x^{-2\alpha})\\ \nonumber
&\geq 2(2\alpha^2-\beta^2-\gamma^2)(x^{2\alpha}+x^{-2\alpha}) \geq 0,
\end{align}
since $|2\alpha| \geq |\beta+\gamma|$ and $|2\alpha| \geq |\beta-\gamma|$. It follows that $H’(x) \geq 0$ for all $x>0$. Since $H(1)=0$, we must have $H(x)\geq 0$ for all $x\geq 1$ and $H(x)\leq 0$ for all $0<x\leq 1$. Therefore, $G’(x)\geq 0$ for all $x\geq 1$ and $G’(x)\leq 0$ for all $0<x\leq 1$. Since $G(1)=0$, it follows that $G(x) \geq 0$ for all $x>0$. The equality occurs if and only if $x=1$ or $|2\alpha|=|\beta+\gamma|$ or $|2\alpha|=|\beta-\gamma|$. Therefore, the equality occurs if and only if $x=1$ or $|\alpha|=|\beta|=|\gamma|$. 

For the converse, suppose that $G(x) \geq 0$ for all $x>0$. From the calculations above, we have $G^{\prime}(1)=0$ and $G^{\prime \prime}(1)=4(2\alpha^2-\beta^2-\gamma^2)$. Since $G$ attains a minimum at $x=1$, we must have $G^{\prime \prime}(1) \geq 0$, which implies that $2\alpha^2 \geq \beta^2+\gamma^2$. 
\end{proof}

A function $f(x)$ is said to be {\it concave} on an interval $[a,b]$, if
$$f(tx_1+(1-t)x_2) \geq tf(x_1)+(1-t)f(x_2),$$
for all $x_1,x_2 \in [a,b]$. A function $f(x)$ is said to be {\it midpoint-concave} on an interval $[a,b]$, if 
$$f\left (\frac{x_1+x_2}{2} \right) \geq \frac{f(x_1)+f(x_2)}{2},$$
for all $x_1,x_2 \in [a,b]$. We are now ready to prove Theorem \ref{chebgen}. 
\\
\\
\noindent {\it Proof of Theorem \ref{chebgen}.} Let $f_x: (0,\infty) \rightarrow \mathbb{R}$ be defined by
$$f_x(\alpha)=\ln T_{\sqrt{\alpha}}(x).$$
We show that $f_x$ is a midpoint-concave function of $\alpha>0$ for any fixed value of $x\geq 1$. It follows from Lemma \ref{lemcheb} that 
$$f_x(\alpha)+f_x(\beta)=\ln T_{\sqrt{\alpha}}(x)+\ln T_{\sqrt{\beta}}(x)=\ln T_{\sqrt{\alpha}}(x)T_{\sqrt{\beta}}(x) \leq \ln T_{\sqrt{\gamma}}(x)^2,$$
where $\gamma=(\alpha+\beta)/2$. Therefore, $f_x(\alpha)+f_x(\beta) \leq 2 f_x((\alpha+\beta)/2)$, which means that $f_x$ is a midpoint-concave function on $(0,\infty)$. A theorem of Jensen states that if a function is continuous and midpoint-concave, then it is concave \cite{Jensen,NP}. It follows that $f_x$ is concave. By Jensen’s inequality \cite{HLP}, we conclude that
$$ \sum_{i=1}^n t_i f_x(\alpha_i^2) \leq f_x \left (\sum_{i=1}^n t_i\alpha_i^2 \right ),$$
which implies the inequality \eqref{cheb}.

For the converse, by replacing $x$ with $(x+x^{-1})/2$ in \eqref{cheb} and taking the natural logarithm of both sides, suppose that
$$G(x)=\ln(x^\alpha+x^{-\alpha})-\sum_{i=1}^n t_i \ln(x^{\alpha_i}+x^{-\alpha_i}) \geq 0,$$
for all $x>0$. It is straightforward to show that $G(1)=G^\prime(1)=0$ and 
$$G^{\prime \prime}(1)=\frac{1}{2} \left (\alpha^2 -\sum_{i=1}t_i \alpha_i^2 \right ).$$
Since $G$ attains a minimum at $x=1$, we conclude that $G^{\prime \prime}(1) \geq 0$, and the claim follows. 
\hfill $\square$

\section{A related cyclic homogeneous inequality}
In this section, we study the cyclic homogeneous inequality \eqref{stated}. We first consider the case of $n=2$. 

\begin{theorem}\label{case2lem}
Let $a,b \geq 0$ and $c=(a+b+1)/2$. If $(c-1)^2 \leq 2ab$, then 
\begin{equation}\label{ineq2}
(x^c+y^c)^2 \geq (x+y)(x^ay^b+y^ax^b),
\end{equation}
for all $x,y\geq 0$, and the equality occurs if and only if $x=y$ or $\{a,b\}=\{0,1\}$. 
\end{theorem}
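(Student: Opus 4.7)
The plan is to reduce \eqref{ineq2} to Lemma \ref{lemcheb} via a homogenization substitution. First I dispose of the boundary case $xy=0$. If $ab=0$, the hypothesis $(c-1)^2 \leq 2ab$ forces $c=1$, whence $\{a,b\}=\{0,1\}$, and both sides of \eqref{ineq2} collapse to $(x+y)^2$, giving equality everywhere. If instead $a,b>0$ and (say) $y=0<x$, the right-hand side vanishes while the left does not, so strict inequality holds. I may therefore assume $x,y>0$ and $a,b>0$.

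Second, I parameterize $x=rt$, $y=rt^{-1}$ with $r>0$ and $t>0$. Because $2c=1+a+b$, all three terms in \eqref{ineq2} are homogeneous of the same degree in $r$, so the $r$-factor cancels and \eqref{ineq2} reduces to
$$(t^c+t^{-c})^2 \geq (t+t^{-1})(t^{a-b}+t^{-(a-b)}).$$
Setting $u=(t+t^{-1})/2 \geq 1$ and applying definition \eqref{deft}, this is exactly $T_c(u)^2 \geq T_1(u)\,T_{a-b}(u)$, which by Lemma \ref{lemcheb} holds whenever $2c^2 \geq 1^2 + (a-b)^2$. Using $(a-b)^2=(a+b)^2-4ab=(2c-1)^2-4ab$, a short computation gives
$$2c^2 - 1 - (a-b)^2 = -2(c-1)^2 + 4ab = 2\bigl(2ab-(c-1)^2\bigr),$$
so the Chebyshev hypothesis of Lemma \ref{lemcheb} is precisely the assumed $(c-1)^2 \leq 2ab$.

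Third, the equality case follows directly from the equality clause of Lemma \ref{lemcheb}: equality in the reduced inequality requires either $u=1$, i.e.\ $t=1$, i.e.\ $x=y$, or $c = 1 = |a-b|$. Since $c=(a+b+1)/2 \geq 1/2$, the second alternative means $a+b=1$ and $|a-b|=1$, which with $a,b\geq 0$ forces $\{a,b\}=\{0,1\}$.

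The only real obstacle is recognizing the substitution; once one parameterizes $(x,y)$ so that $xy=r^2$ and $x/y=t^2$, the two-variable symmetric expressions acquire the Chebyshev form $t^\alpha+t^{-\alpha}$, and the exponents line up perfectly thanks to the definition $c=(a+b+1)/2$. The identity translating $2c^2 \geq 1+(a-b)^2$ into $(c-1)^2 \leq 2ab$ is the lone non-mechanical step, and everything else is bookkeeping around Lemma \ref{lemcheb}.
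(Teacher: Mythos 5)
Your proof is correct and follows essentially the same route as the paper: both reduce \eqref{ineq2} to Lemma \ref{lemcheb} with parameters $(\alpha,\beta,\gamma)$ proportional to $(c,1,a-b)$, verify that $2\alpha^2\geq\beta^2+\gamma^2$ is equivalent to $(c-1)^2\leq 2ab$, and read off the equality case from the lemma. Your substitution $x=rt$, $y=rt^{-1}$ is just a rephrasing of the paper's division by $(xy)^c$, and your explicit handling of the boundary cases $xy=0$ and $ab=0$ is a small but welcome addition of rigor.
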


\begin{proof}
With $\alpha=c/2$, $\beta=1/2$, and $\gamma=(a-b)/2$, Lemma \ref{lemcheb} implies that
\begin{equation}\nonumber
((x/y)^{c/2}+(x/y)^{-c/2})^2 \geq ((x/y)^{1/2}+(x/y)^{-1/2})((x/y)^{(a-b)/2}+(x/y)^{(b-a)/2}),
\end{equation}
for all $x\geq 1$, if $2\alpha^2 \geq \beta^2+\gamma^2$ or equivalently $(c-1)^2 \leq 2ab$. The equality occurs if and only if $x/y=1$ or $|\alpha|=|\beta|=|\gamma|$, or equivalently, if and only if $x=y$ or $\{a,b\}=\{0,1\}$. 
\end{proof}

Next, we consider the following general homogeneous cyclic inequality
\begin{equation} \label{gen}
 \left (\sum_{i=1}^n
    x_i^{c} \right )^2
 \geq  \sum_{i=1}^n  x_i    \sum_{i=1}^n x_i^ax_{i+1}^b,
\end{equation}
where $c=(a+b+1)/2$ and $a,b,x_1,\ldots, x_n\geq 0$. One asks that under what conditions on $a,b,c$, the inequality \eqref{gen} holds for all $x_1,\ldots, x_n\geq 0$. It is straightforward to see that if $a+b=1$, then the inequality \eqref{gen} for all $x_1,\ldots, x_n\geq 0$, follows from the Rearrangement inequality \cite[Ch.\ 6]{ZC}. However, if $a+b \neq 1$, then the inequality \eqref{gen} fails to hold if $n$ is large enough \cite{J}. In other words, the validity of the inequality \eqref{gen} for all $x_1,\ldots, x_n\geq 0$ for fixed values of $a,b\geq 0$ with $a+b \neq 1$ depends on $n$. Similarly, given a fixed value of $n$, the inequality \eqref{gen} holds for a specific subset ${\cal O}_n$ of values $(a,b) \in [0,\infty) \times [0,\infty)$. Theorem \ref{case2lem} describes ${\cal O}_2$ completely. For $n>2$, such a complete description seems to be difficult to find. However, in the following theorem, we derive a sufficient condition for the inequality \eqref{gen} in the case of $n=3$.

\begin{theorem}\label{case3lem}
If $2a+1\geq b\geq (a-1)/2 \geq -b/2$, then 
$$(x^c+y^c+z^c)^2 \geq (x+y+z)(x^ay^b+y^az^b+z^ax^b),$$
for all $x,y,z\geq 0$. The equality occurs if and only if $x=y=z$.  
\end{theorem}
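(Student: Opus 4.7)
The plan is to expand both sides and bound the right-hand monomials by weighted AM--GM of the left-hand monomials. Writing $c=(a+b+1)/2$, the inequality unfolds to
$$\sum_{\text{cyc}} x^{2c}+2\sum_{\text{cyc}}x^c y^c\;\geq\;\sum_{\text{cyc}}\bigl(x^{a+1}y^b+x^a y^{b+1}\bigr)+\sum_{\text{cyc}}x^ay^bz,$$
where the two cyclic sums on the right split the expansion of $(x+y+z)\sum_{\text{cyc}}x^ay^b$ into \emph{pair terms} (involving exactly two variables) and \emph{triple terms} (involving all three).

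For each pair monomial, I would apply weighted AM--GM against the three associated left-hand monomials $\{x^{2c},y^{2c},x^c y^c\}$. For $x^{a+1}y^b$, the nonnegative weights $(\alpha,\beta,\gamma)$ must satisfy $\alpha+\beta+\gamma=1$, $2c\alpha+c\gamma=a+1$, and $2c\beta+c\gamma=b$, which leaves one free parameter; the analogous linear system governs $x^a y^{b+1}$. Once the free parameters are pinned down, the nonnegativity of every weight translates into the two linear constraints $b\leq 2a+1$ and $a\leq 2b+1$ of the theorem.

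For each triple monomial $x^ay^bz$, weighted AM--GM against $\{x^{2c},y^{2c},z^{2c}\}$ with weights $(a,b,1)/(2c)$ already gives one valid bound; blending in weight on the mixed monomials $\{x^cy^c,y^cz^c,x^cz^c\}$ gives extra flexibility needed for the global ledger. The third hypothesis $a+b\geq 1$, equivalently $c\geq 1$, is what ultimately guarantees that the pair-term consumption of the $\sum x^{2c}$ budget stays within what is available.

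The main obstacle is the bookkeeping: the three cyclic copies of each pair bound draw from both the $\sum x^{2c}$ and the $\sum x^c y^c$ budgets, and the triple bounds must be absorbed by what remains. I would exploit the cyclic symmetry of the problem to reduce to a single real parameter governing the free choices, rewrite all nonnegativity and budget constraints as linear inequalities in that parameter, and verify that the feasibility region is nonempty precisely under the stated hypotheses. Equality throughout the chain forces equality in every AM--GM step applied, and because the cyclic structure connects every pair of the three variables, this forces $x=y=z$.
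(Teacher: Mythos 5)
Your strategy is essentially the paper's: expand both sides, dominate each monomial on the right by a weighted AM--GM of monomials appearing on the left, and check that the cyclic sum of these bounds consumes the $\sum x^{2c}$ and $\sum x^cy^c$ budgets exactly. The paper carries this out with explicit weights, namely (after reducing WLOG to $a\ge b$ and dispatching the subcase $b\ge a-1$ by citing an earlier result) $\frac{a+1-b}{2c}x^{2c}+\frac{b}{c}x^cy^c\ge x^{a+1}y^b$, $\frac{b}{2c}x^{2c}+\frac{2b-a+1}{2c}y^{2c}+\frac{a-b}{c}x^cy^c\ge x^ay^{b+1}$, and $\frac{a-b-1}{2c}x^{2c}+\frac{1}{c}x^cz^c+\frac{b}{c}x^cy^c\ge x^ay^bz$, so the feasibility you defer to ``bookkeeping'' does check out, with the hypothesis $2b+1\ge a$ surfacing as nonnegativity of the $y^{2c}$-weight and the ledger closing exactly since $\frac{2a-b}{2c}+\frac{2b-a+1}{2c}=1$ and $\frac{a+b}{c}+\frac{1}{c}=2$.
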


\begin{proof}
Without loss of generality, we assume that $a\geq b$. If $b\geq a-1$, then the claim follows from \cite[Prop.\ 2.1]{J}. Thus, suppose that $a-b-1 \geq 0$. Let $x,y,z\geq 0$. By Jensen’s inequality \cite[Ch.\ 7]{ZC}: 
\begin{align} \nonumber
\frac{a+1-b}{2c}  x^{2c}+\frac{b}{c} x^cy^c & \geq x^{a+1}y^b, \\ \nonumber
\frac{b}{2c} x^{2c}+\frac{2b-a+1}{2c}y^{2c}+\frac{a-b}{c} x^cy^c & \geq x^{a}y^{b+1}, \\ \nonumber
\frac{a-b-1}{2c}  x^{2c}+\frac{1}{c} x^cz^{c}+\frac{b}{c} x^cy^c & \geq x^{a}y^bz.
\end{align}
Adding these inequalities yields 
\begin{equation}\label{addxyz}
\frac{2a-b}{2c}x^{2c}+\frac{2b-a+1}{2c}y^{2c}+\frac{a+b}{c}x^cy^c + \frac{1}{c}x^cz^c \geq (x+y+z)x^ay^b.
\end{equation}
Similarly
\begin{align}\label{addxyz2}
\frac{2a-b}{2c}y^{2c}+\frac{2b-a+1}{2c}z^{2c}+\frac{a+b}{c}y^cz^c + \frac{1}{c}x^cy^c \geq (x+y+z)y^az^b, \\ \label{addxyz3}
\frac{2a-b}{2c}z^{2c}+\frac{2b-a+1}{2c}x^{2c}+\frac{a+b}{c}x^cz^c + \frac{1}{c}y^cz^c \geq (x+y+z)z^ax^b.
\end{align}
The claim follows from adding inequalities \eqref{addxyz}-\eqref{addxyz3}.
\end{proof}

\section{The case of $a=b=1$}\label{n11}

In this section, we prove Theorem \ref{main} which states that the inequality \eqref{stated} holds in the case of $a=b=1$ if and only if $n\leq 8$. To show that the inequality \eqref{stated} does not hold for $n\geq 9$, it is sufficient to find a counterexample for the inequality with 9 variables, since if the inequality \eqref{stated} holds for $n$ variables, then it holds for $n-1$ variables. Here is one such counterexample \cite{J}:
$$ x_1=x_9=8.5,~x_2=x_8=9,~x_3=x_7=10,~x_4=x_6=11.5,~x_5=12.$$
It is then left to prove that the inequality \eqref{stated} holds with $a=b=1$ for all $x_1,\ldots, x_8\geq 0$. We first need a lemma. 
\begin{lemma}\label{ineq123}
Let $x_1,\ldots, x_8$ be nonnegative real numbers. Then
$$\sum_{i=1}^8
    x_i^{3} \geq \frac{1}{8} \left ( \sum_{i=1}^8  x_i \right ) \left ( \sum_{i=1}^8  x_i^2 \right ).$$
\end{lemma}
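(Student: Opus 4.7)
The plan is to recognize this lemma as a direct instance of the classical Chebyshev sum inequality (for monotone sequences, not the polynomial one) applied to the sequences $(x_i)$ and $(x_i^2)$. First I would reorder the $x_i$ to be nondecreasing; because all $x_i\geq 0$, the sequence $(x_i^2)$ is sorted in the same order. Chebyshev's sum inequality for two similarly-ordered sequences of length $n=8$ then yields
$$8\sum_{i=1}^8 x_i\cdot x_i^2 \;\geq\; \left(\sum_{i=1}^8 x_i\right)\left(\sum_{i=1}^8 x_i^2\right),$$
which on dividing by $8$ is precisely the statement of the lemma.

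For a self-contained, one-line justification I would prefer to present the SOS form of that same argument. Starting from $(x_i-x_j)(x_i^2-x_j^2)=(x_i-x_j)^2(x_i+x_j)$ and summing over all ordered pairs $(i,j)$ with $1\leq i,j\leq 8$, a routine expansion gives the identity
$$\sum_{1\leq i<j\leq 8}(x_i-x_j)^2(x_i+x_j) \;=\; 8\sum_{i=1}^8 x_i^3 \;-\; \left(\sum_{i=1}^8 x_i\right)\left(\sum_{i=1}^8 x_i^2\right).$$
Each summand on the left is nonnegative since $x_i+x_j\geq 0$, so the right-hand side is nonnegative, which is exactly the claim.

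There is no real obstacle here: the lemma is the Chebyshev sum inequality in disguise, and the SOS proof works verbatim for any $n$ in place of $8$. The only minor bookkeeping is verifying the expansion of $\sum_{i,j}(x_i-x_j)(x_i^2-x_j^2)$, which produces $2n\sum x_i^3 - 2(\sum x_i)(\sum x_i^2)$ and hence the identity above after pairing the $i<j$ and $i>j$ contributions.
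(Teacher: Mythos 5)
Your proof is correct, but it takes a different route from the paper's. The paper derives the lemma from the Power Mean Inequality: writing $M_p=\bigl(\tfrac18\sum x_i^p\bigr)^{1/p}$, it uses $M_3\geq M_1$ and $M_3\geq M_2$ and multiplies $M_3\cdot M_3^2\geq M_1\cdot M_2^2$ to get $\tfrac18\sum x_i^3\geq \bigl(\tfrac18\sum x_i\bigr)\bigl(\tfrac18\sum x_i^2\bigr)$, which is the claim. You instead invoke Chebyshev's sum inequality for the similarly ordered sequences $(x_i)$ and $(x_i^2)$, backed up by the sum-of-squares identity $\sum_{i<j}(x_i-x_j)^2(x_i+x_j)=8\sum x_i^3-\bigl(\sum x_i\bigr)\bigl(\sum x_i^2\bigr)$; your expansion checks out. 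Both arguments are one-liners from standard inequalities and both generalize verbatim to any $n$ (with $\tfrac1n$ in place of $\tfrac18$), so the choice is largely aesthetic: your SOS version is completely self-contained and makes the equality case ($x_1=\cdots=x_8$, given nonnegativity) transparent, while the paper's version leans on a cited classical result and fits the style of the rest of Section 4, which already quotes the Power Mean Inequality for the bound on $\sum x_i^6$. There is also a mildly pleasing resonance in your approach, given the paper's title, though the Chebyshev sum inequality is of course unrelated to the Chebyshev polynomials of Section 2.
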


\begin{proof}
By the Power Mean Inequality \cite[Ch.\ III]{Bullen}, one has
$$
\left ( \frac{1}{8}  \sum_{i=1}^8 x_i^3  \right )^{1/3}  \geq \frac{1}{8}\sum_{i=1}^8  x_i ~\mbox{and}~
\left ( \frac{1}{8}  \sum_{i=1}^8  x_i^3 \right )^{1/3}  \geq \left ( \frac{1}{8} \sum_{i=1}^8  x_i^2  \right )^{1/2}.
$$
The claim follows from these inequalities. 
\end{proof}

Now, we are ready to prove Theorem \ref{main}. 
\\
\\
{\it Proof of Theorem \ref{main}.} Equivalently, we show that the maximum value of the function $f: \mathbb{U} \rightarrow \mathbb{R}$ defined by
$$f(x_1,\ldots, x_8)=\dfrac{\sum_{i=1}^8 x_i^4x_{i+1}^4}{\left (\sum_{i=1}^8
    x_i^{6}\right )^2},$$
is 1, where
$$\mathbb{U}= \left \{(x_1,\ldots, x_8): \sum_{i=1}^8 x_i^4=1 \right \}.$$
one has $(\sum_{i=1}^8 x_i^6/8)^{1/6} \geq  (\sum_{i=1}^8 x_i^4/8)^{1/4} $ by the Power Mean Inequality \cite[Ch.\ III]{Bullen}. Therefore, $\sum_{i=1}^8 x_i^6\geq \sqrt{1/8}$ and so the function $f$ is bounded from above on $\mathbb{U}$, hence it attains a positive absolute maximum on the compact set $\mathbb{U}$, say at $(x_1,\ldots, x_8)$. Without loss of generality, we can assume $x_1,\ldots, x_8 \geq 0$. By the method of Lagrange multipliers, there exists a real number $\lambda$ such that 
\begin{equation}\label{lag1}
\frac{1}{A^4} \left (4x_i^3(x_{i-1}^4+x_{i+1}^4)A^2-2AB(6x_i^5) \right )=\lambda (4x_i^3),~\forall i=1,\ldots, 8,
\end{equation}
where $A=x_1^6+\cdots+x_8^6$ and $B=x_1^4x_2^4+\cdots+x_8^4x_1^4$. Therefore,
\begin{equation}\label{lag12}
x_i^4(x_{i-1}^4+x_{i+1}^4)A-3Bx_i^6=\lambda A^3x_i^4,~\forall 1\leq i \leq 8.
\end{equation}

By summing the equations \eqref{lag12}, we have $\lambda=-B/A^2$. We need to show that $A^2 \geq B$. On the contrary, suppose $B>A^2$, and we will derive a contradiction. 

Equations \eqref{lag1} imply that, if $x_i \neq 0$, then
\begin{equation}\label{lag2}
3Bx_i^2=A(x_{i-1}^4+x_{i+1}^4)+AB.
\end{equation}

First, we show that $x_i \neq 0$ for all $i\in \{1,\ldots, 8\}$. On the contrary, and without loss of generality, suppose $x_8=0$ and $x_7 >0$. Given $\epsilon\in (0,x_7)$, let
$$\delta=\delta(\epsilon)=(x_7^4-(x_7-\epsilon)^4)^{1/4},$$
such that $(x_7-\epsilon)^4+\delta^4=x_7^4$, and so $\delta^3 \delta^\prime=(x_7-\epsilon)^3$. We define
 $$F(\epsilon)=f(x_1,\ldots, x_6,x_7-\epsilon, \delta)=\frac{B_\epsilon}{A_\epsilon^2},$$
and compute
$$F^\prime(\epsilon)=\frac{4(x_7-\epsilon)^3}{A_\epsilon^4} \left (A_\epsilon^2(-x_6^4-\delta^4+(x_7-\epsilon)^4+x_1^4)+3A_\epsilon B_\epsilon((x_7-\epsilon)^2-\delta^2) \right ).$$ 
It follows that 
\begin{align}\nonumber
\lim_{\epsilon \rightarrow 0^+}F^\prime(\epsilon) & =\frac{x_7^3}{A^4}(A^2(+x_7^4+x_1^4)-A^2x_6^2+3ABx_7^2) \\ \label{cont0}
& = \frac{x_7^3}{A^4}(A^2(x_1^4+x_7^4)+A^2B)>0,
\end{align}
where we have used equation \eqref{lag2} with $i=7$ to obtain $-A^2x_6^2+3ABx_7^2=A^2B$. 
The inequality \eqref{cont0} is a contradiction with the assumption that $F(\epsilon)$ attains a maximum as $\epsilon \rightarrow 0^+$. We conclude that $x_i>0$ for all $i=1,\ldots, 8$. In particular, equations \eqref{lag2} hold for all $i=1,\ldots, 8$. In the rest of the proof, we let $y_i=x_i^2$. Hence, with $C=B/A$, the equations \eqref{lag2} turn into
\begin{equation}\label{eqlag}
3y_i C =y_{i-1}^2+y_{i+1}^2+B,~\forall 1\leq i \leq 8.
\end{equation}
It follows that 
\begin{align} \nonumber
3(y_i+y_{i+4})C & =2B+ y_{i-1}^2+y_{i+1}^2+y_{i+3}^2+y_{i+5}^2, \\ \nonumber
3(y_{i+2}+y_{i+6})C & =2B+ y_{i+1}^2+y_{i+3}^2+y_{i+5}^2+y_{i+7}^2,
\end{align}
which imply that $y_j+y_{j+4}=y_{j+2}+y_{j+6}$ for all $j$, since $y_{i-1}=y_{i+7}$ as the indices are computed modulo 8. Therefore, there exist nonnegative real numbers $r,s$ such that
\begin{align}\label{eqsub}
y_1+y_5& =y_3+y_7=r, \\ \label{eqsub2}
y_2+y_6 & =y_4+y_8=s.
\end{align}
Equations \eqref{eqlag} imply that
\begin{align}\nonumber
3(y_1-y_3)C & =y_8^2-y_4^2 \\ \nonumber
3(y_2-y_4)C & =y_1^2-y_5^2 \\ \nonumber
3(y_3-y_5)C & =y_2^2-y_6^2 \\ \nonumber
3(y_4-y_6)C & =y_3^2-y_7^2.
\end{align}
Let $\bar y_i=y_i-r/2$ if $i$ is odd, and $\bar y_i=y_i-s/2$ if $i$ is even. It follows that $\bar y_i+ \bar y_{i+4}=0$ for all $i$. Moreover, for $i$ odd, we have
\begin{align}\nonumber
3C(\bar y_i - \bar y_{i+4}) & =3C(y_{i}-y_{i+4})=y_{i-1}^2-y_{i+1}^2+y_{i+3}^2-y_{i+5}^2 \\  \nonumber
 &= (y_{i-1}-y_{i+3})(y_{i-1}+y_{i+3})+(y_{i+1}-y_{i+5})(y_{i+1}+y_{i+5})\\ \nonumber
 &= 2\bar y_{i-1}s+2 \bar y_{i+1}s,
\end{align}
which implies that $\bar y_i=(\bar y_{i-1}+\bar y_{i+1})s/(3C)$ for odd $i$. Similarly, $\bar y_i=(\bar y_{i-1}+\bar y_{i+1})r/(3C)$ for even $i$. It then follows that
$$\bar y_i=\frac{s}{3C}(\bar y_{i-1}+\bar y_{i+1})=\frac{rs}{9C^2}(\bar y_{i-2}+\bar y_{i}+\bar y_{i}+\bar y_{i+2})=\frac{2rs}{9C^2}\bar y_i,$$
for odd $i$, and similarly for even $i$. We claim that $9C^2 \neq 2rs$. On the contrary, suppose $9C^2 = 2rs$, and so, since $C=B/A>A$, we must have
\begin{equation}\label{cont}
6 \sqrt{2}A < 6\sqrt{2} C \leq 4\sqrt{rs} \leq 2(r+s) \leq \sum_{i=1}^8 y_i.
\end{equation}
However, by Lemma \ref{ineq123}, we have $8A \geq \sum_{i=1}^8 y_i$ which contradicts \eqref{cont}, since $6\sqrt{2} >8$. Thus $9C^2 \neq 2rs$, and so $\bar y_i=0$ for all $i$. Therefore, $y_1=y_3=y_5=y_7=r/2$, and $y_2=y_4=y_6=y_8=s/2$. So we have $r^2+s^2=4((r/2)^2+(s/2)^2)=\sum_{i=1}^8 y_i^2=1$ and
\begin{equation}\label{fx}
f(x_1,\ldots, x_8)=\dfrac{8(r/2)^2(s/2)^2}{(4(r/2)^3+4(s/2)^3)^2} = \dfrac{2r^2s^2}{(r^3+s^3)^2}\leq 1,
\end{equation}
since it follows from $r^2+s^2=1$ that
$$r^3+s^3 \geq 2 \left ( \frac{r^2+s^2}{2} \right )^{3/2} \geq 2 \left (\frac{1}{2} \right )^{3/2}\geq \frac{r^2+s^2}{\sqrt 2} \geq \sqrt 2 rs.$$
The equality occurs in \eqref{fx} if and only if $r=s$ (and so $y_1=y_2=\cdots=y_8$); hence, the equality in \eqref{5gen} occurs if and only if $x_1=x_2=\cdots=x_8$. \hfill $\square$

%
%


\begin{thebibliography}{30}

%
\bibitem{Bullen} P. S. Bullen, \emph{Handbook of Means and Their Inequalities}, Springer (2003). 
%
%
%
%
\bibitem{ZC} Z. Cvetkovski, Inequalities, \emph{Theorems, Techniques, and Selected Problems}, Springer, Berlin (2012). 
%
%
%
%
%
%
%
%
%
%
%
%
%

\bibitem{HLP}G.H. Hardy, J.E. Littlewood, G. Pólya, Inequalities , Cambridge Univ. Press (1934).
 \bibitem{J} M. Javaheri, A new arrangement inequality, \emph{J. Ineq. Pure Appl. Math.} {\bf 7}(5) (2006), Article 162.


\bibitem{Jensen}J. L. W. V. Jensen, Sur les fonctions convexes et les inégalités entre les valeurs moyennes, Acta Mathematica (Institut Mittag-Leffler) 1906, Vol. 30, No. 1, pp 175--193.

\bibitem{NP}  C.P. Niculescu and L-E. Persson, Convex Functions and Their Applications, Springer International Publishing, (2018). 
\end{thebibliography}
\end{document}